\newtheorem{te}{Theorem}[section]
\theoremstyle{definition}
\theoremstyle{os}
\newtheorem{os}[te]{Remark}
\theoremstyle{prop}
\theoremstyle{lem}
\newtheorem{lem}[te]{Lemma}
\theoremstyle{coro}
\newtheorem{coro}[te]{Corollary}
\numberwithin{equation}{section}
\begin{document}

\title{On the fractional counterpart of the higher-order equations}

\author{Mirko D'Ovidio} 
\address{Department of Statistical Sciences, Sapienza University of Rome}
\email{mirko.dovidio@uniroma1.it}

\keywords{Fractional equations, Higher-order equations, Subordinators, Inverse processes, Pseudo-processes}

\date{\today}


\subjclass[2000]{Primary }

\begin{abstract}
In this work we study the solutions to some fractional higher-order equations. Special cases in which time-fractional derivatives take integer values are also examined and the explicit solutions are presented. Such solutions can be expressed by means of the transition laws of stable subordinators and their inverse processes. In particular we establish connections between fractional and higher-order equations. 
\end{abstract}

\maketitle

\section{Introduction}
The aim of this work is to investigate the solutions to some boundary value problems involving the equation
\begin{equation} 
\left( D^{\nu_1}_{t} + D^{\nu_2}_{x} \right) u = 0, \quad x \in \Omega_0,\, t \geq 0, \quad \nu_j \in (0,1) \cup \mathbb{N},\, j=1,2 \label{eqFirst}
\end{equation}
where $\Omega_0= \Omega \cup \{0\}$ and $\Omega=(0,\infty)$. The symbol $D^{\nu}_{z}$ stands for the Riemann-Liouville fractional operator and  will be better characterized further in the text. Here the fractional powers $\nu_j$, $j=1,2$ can take both real and integer values in the interval $(0,1) \cup \mathbb{N}$. Thus, we study fractional- and higher-order equations.

Time fractional equations have been investigated by many researchers. The context in which such equations have been developed is that of anomalous diffusions or diffusions on porous media. In the works by \citet{Nig86, Wyss86, SWyss89, Koc89} the authors dealt with second-order operators in space and gave the solutions in terms of Wright or Fox functions.  More recently, in the paper by \citet{OB09} some interesting results on the explicit form of the solutions have been presented. Moreover, in the paper by \citet{BMN09ann} and the references therein the reader can find some interesting results on abstract Cauchy problems and  fractional diffusions on bounded domain.

In the present paper we deal with higher-order heat type equations in which the derivative with respect to $t$ is replaced by the fractional derivative and therefore we obtain higher-order fractional equations. Such equations have been studied by \citet{beg08} who has presented the solutions in terms of inverse Fourier transforms. In that paper, the author pointed out that such solutions can be viewed as the transition laws of compositions involving pseudo-processes and randomly varying times. Furthermore, we establish some connection between higher-order equations which lead to pseudo-processes (or higher-order diffusions) and, the time-fractional counterpart to those equations which lead to either stable or their inverse processes. In the matter of pseudo-processes (see Section \ref{sez4}) we refer to the paper by \citet{kry60, Hoc78, Funaki79, Ors91, HO94, LCH03}. 
In sections \ref{sez2},\ref{sez3} and \ref{sez4}, some preliminary results are presented whereas, the main results of this work are collected in section \ref{sez5}. In particular, we obtained the solutions to the equation \eqref{eqFirst} in the following cases:
\begin{equation*}
\begin{array}{lll}
a) & \nu_1 \in \mathbb{N},& \nu_2 \in (0,1],\\
b) & \nu_1 \in (0,1], & \nu_2 \in \mathbb{N},\\
c) & \nu_1 \in (0,1], & \nu_2  \in (0,1].
\end{array}
\end{equation*}
The special cases
\begin{equation*}
\begin{array}{lll}
a1) & \nu_1 =1,& \nu_2 = 1/n, \, n \in \mathbb{N},\\
b1) & \nu_1 = 1/n, \, n \in \mathbb{N}, & \nu_2 =1,
\end{array}
\end{equation*}
represent the fractional counterpart of the higher-order equations of order $n$. Section \ref{sez6} is devoted to the explicit representation of solutions in some particular cases involving fundamental equations of mathematical physics.

\section{Inverse processes}
\label{sez2}
Let $\varphi = \varphi(x,t)$ denote the distribution of a L\'evy process $\mathfrak{X}_t$, $t>0$ on $\mathbb{R}^n$ for which
\begin{equation}
\mathbb{E} \exp - i\xi \mathfrak{X}_t = \exp -t \Psi_{\mathfrak{X}}(\xi). \label{cfL}
\end{equation}
The L\'evy process $\mathfrak{X}_t$, $t>0$ represents the stochastic solution to the equation
$$ (D_{0+, t} - \mathcal{A})\varphi = 0, \quad \varphi \in D(\mathcal{A}) $$
where
\begin{equation*}
D(\mathcal{A}) = \left\lbrace f \in L^1_{loc}(\mathbb{R}^n)\, : \, \int_{\mathbb{R}^n} |\hat{f}(\xi)|^2 (1+ \Psi_{\mathfrak{X}}(\xi)) \, d\xi < \infty \right\rbrace. 
\end{equation*}
We have used the familiar notation in which $\hat{f}(\xi)=\mathcal{F}[f(\cdot)](\xi)$ stands for the Fourier transform of $f$. 

A stable subordinator, say $\mathfrak{H}_\nu(t)$, $t>0$, is a L\'evy process with non-negative, independent and homogeneous increments, see \citet{Btoi96} and, the $x$-Lapace transform reads as
\begin{equation}
\mathbb{E} \exp -\lambda \mathfrak{H}_\nu(t) = \mathcal{L}[h_{\nu}(\cdot, t)](\lambda) = \exp - t \lambda^\nu \label{laph}
\end{equation}
where $h_\nu$ is the density law of $\mathfrak{H}_\nu$. Straightforward calculations lead to 
\begin{equation}
\Psi_{\mathfrak{H}}(\xi) = |\xi |^{\nu} \left( -i \frac{\pi \nu}{2} \frac{\xi}{|\xi |} \right). \label{exponent}
\end{equation}
According to the literature, we define the process $\mathfrak{L}^\nu(t)$, $t>0$ as the inverse to the stable subordinator $\mathfrak{H}_\nu$ and for which $Pr\{ \mathfrak{H}_\nu(x) >t \} = Pr\{\mathfrak{ L}^{\nu}(t) < x \}$. Such an inverse process has non-negative, non-stationary and non-independent increments (see \citet{MSheff04}). The law of $\mathfrak{L}^\nu$ can be written in  terms of the Wright function
$$ W_{\alpha, \beta}(z) = \sum_{k=0}^\infty \frac{z^k}{k!\, \Gamma(\alpha k + \beta)}$$
as follows 
\begin{equation}
l_{\nu}(x,t) = t^{-\nu} W_{-\nu, 1- \nu}\left(-x t^{-\nu}\right), \quad x \in \Omega_0, \; t>0,\; \nu \in (0,1). \label{ldist}
\end{equation}
From \eqref{ldist} we immediately get the Laplace transform
\begin{equation}
\mathbb{E} \exp -\lambda \mathfrak{L}^{\nu}(t) = \mathcal{L}[l_\nu(\cdot, t)](\xi) = E_{\nu}(- \xi t^\nu) \label{lapL}
\end{equation}
in terms of the Mittag-Leffler function $E_{\alpha}(z) = E_{\alpha, 1}(z)$ where the entire function
\begin{equation*}
E_{\alpha, \beta}(z) = \sum_{k \geq 0} \frac{z^k}{\Gamma(\alpha k +\beta)}
\end{equation*}
is the generalized Mittag-Leffler for which
\begin{equation}
\int_0^\infty e^{-\lambda z} z^{\beta -1} E_{\alpha, \beta}( - \mathfrak{c} z^\alpha) \, dz = \frac{\lambda^{\alpha -\beta}}{\lambda^{\alpha} + \mathfrak{c}}. \label{lapMittagLeffler}
\end{equation}
The function $l_\nu$ is the density law of the inverse process $\mathfrak{L}_\nu$. The governing equations of both processes introduced so far can be written by means of the fractional operators \eqref{Rfracder} and \eqref{Rfracder2} as we will show in the next section. In particular, from the symbol \eqref{exponent}, we recognize that $\mathfrak{H}_{\nu}$ is a stable process (positively) totally skewed (see e.g. \citet{Zol86}). Thus, as for stable processes, we expect a fractional operator in space as well. 

We state the following relevant fact.

\begin{lem}
\label{lemma0}
The following holds
\begin{equation}
\frac{t}{x}\, l_{\nu}(t,x) = h_{\nu}(x,t), \quad x \in \Omega,\; t > 0. \label{assumef}
\end{equation}
\end{lem}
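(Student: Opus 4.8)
The plan is to read \eqref{assumef} as the analytic shadow of the first-passage duality between the subordinator $\mathfrak{H}_\nu$ and its inverse $\mathfrak{L}^\nu$, and to derive it from that duality together with the self-similarity of the two processes; I would then corroborate the outcome by a direct series comparison using the explicit Wright representation \eqref{ldist}.

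For the probabilistic derivation I would start from the relation $Pr\{\mathfrak{H}_\nu(x)>t\}=Pr\{\mathfrak{L}^\nu(t)<x\}$ recorded above and rewrite it as the integral identity $\int_0^x l_\nu(y,t)\,dy=\int_t^\infty h_\nu(s,x)\,ds$. Differentiating in $x$ turns the left-hand side into $l_\nu(x,t)$. On the right-hand side I would first exploit the self-similarity $h_\nu(s,x)=x^{-1/\nu}h_\nu(sx^{-1/\nu},1)$, which follows from \eqref{laph} by the scaling $\lambda\mapsto \lambda x^{1/\nu}$, so that $\int_t^\infty h_\nu(s,x)\,ds$ depends on $x$ only through $tx^{-1/\nu}$; differentiating this composite and changing variables re-expresses the derivative through $h_\nu$ evaluated at a rescaled argument. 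Interchanging the roles of $x$ and $t$, multiplying by $t/x$, and then using the self-similarity of $h_\nu$ once more to undo the rescaling should collapse the right-hand side to $h_\nu(x,t)$ and thereby yield \eqref{assumef}.

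As an independent check I would expand both sides into power series. Substituting the swapped arguments into \eqref{ldist} gives $l_\nu(t,x)=x^{-\nu}W_{-\nu,1-\nu}(-tx^{-\nu})$; expanding the Wright function by its definition, multiplying by $t/x$, and reindexing the sum produces a series $\sum_{j\ge1} c_j\,t^{j}x^{-\nu j-1}$ whose coefficients contain the factor $1/[(j-1)!\,\Gamma(1-\nu j)]$. Applying the reflection formula $\Gamma(1-\nu j)\Gamma(\nu j)=\pi/\sin(\pi\nu j)$ rewrites these coefficients in terms of $\Gamma(\nu j)\sin(\pi\nu j)$, which are exactly the factors appearing in the convergent series for the one-sided stable density $h_\nu(x,t)$ obtained by inverting \eqref{laph} term by term (equivalently, from its self-similar form); the identity \eqref{assumef} would then follow once these two coefficient families are shown to coincide.

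The step I expect to be the main obstacle is pinning down the overall multiplicative constant: both the Jacobian generated by the change of variables in the self-similarity argument and the factor released when the Wright series is reindexed and passed through the reflection formula must be tracked exactly, since it is precisely this constant that governs whether the two coefficient families agree. The remaining points are routine: absolute convergence of the Wright series for every argument (so that term-by-term operations are legitimate), differentiation under the integral sign in the duality identity, and appeal to uniqueness of the Laplace transform to identify the reconstructed series with $h_\nu$.
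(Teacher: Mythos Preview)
Your plan is sound and takes a genuinely different route from the paper. The paper argues purely by Laplace transforms: it first obtains $\mathcal{L}[h_\nu(x,\cdot)](\lambda)=x^{\nu-1}E_{\nu,\nu}(-\lambda x^\nu)$ from the double transform \eqref{llppboth} together with \eqref{lapMittagLeffler}, and then checks that the $t$-Laplace transform of $\tfrac{t}{x}\,l_\nu(t,x)$, computed as $-\tfrac{1}{x}\tfrac{d}{d\lambda}E_\nu(-\lambda x^\nu)$ via \eqref{lapL}, coincides with this. Your probabilistic derivation via the hitting-time duality $Pr\{\mathfrak{H}_\nu(x)>t\}=Pr\{\mathfrak{L}^\nu(t)<x\}$ combined with the scaling $h_\nu(s,x)=x^{-1/\nu}h_\nu(sx^{-1/\nu},1)$ is more conceptual and explains \emph{why} the identity holds; your Wright-series cross-check is an independent verification the paper does not attempt. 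What the paper's method buys is brevity: once \eqref{lapL} and \eqref{lapMittagLeffler} are in hand, the proof is two lines.

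Your flagged obstacle, the overall constant, is exactly the right thing to watch. Carrying your duality argument through, one has $\int_t^\infty h_\nu(s,x)\,ds=\int_{tx^{-1/\nu}}^\infty h_\nu(u,1)\,du$, and differentiating in $x$ gives $\tfrac{t}{\nu x}\,h_\nu(t,x)$; hence $l_\nu(x,t)=\tfrac{t}{\nu x}\,h_\nu(t,x)$, i.e.\ $h_\nu(x,t)=\tfrac{\nu t}{x}\,l_\nu(t,x)$, with an extra factor $\nu$ compared to \eqref{assumef}. The same factor is hidden in the paper's last step: since $\Gamma(\nu k+1)=\nu k\,\Gamma(\nu k)$ one has $\tfrac{d}{dz}E_\nu(z)=\tfrac{1}{\nu}E_{\nu,\nu}(z)$, so $-\tfrac{1}{x}\tfrac{d}{d\lambda}E_\nu(-\lambda x^\nu)=\tfrac{1}{\nu}\,x^{\nu-1}E_{\nu,\nu}(-\lambda x^\nu)$ rather than $x^{\nu-1}E_{\nu,\nu}(-\lambda x^\nu)$. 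Thus your approach not only reproduces the paper's conclusion but also isolates the correct normalization; the Laplace-transform proof does too once that derivative is tracked carefully.
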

\begin{proof}
The $(x,t)-Laplace$ transforms of $h_{\nu}$ writes
\begin{equation}
\int_{0}^\infty e^{-\lambda t} \int_{0}^\infty e^{-\xi x} h_{\nu}(x,t)\, dx \, dt= \int_{0}^\infty e^{-\lambda t} e^{-t \xi^\nu} dt = \frac{1}{\lambda + \xi^\nu}. \label{llppboth}
\end{equation}
This is because of the fact that $\mathbb{E} \exp -\xi \mathfrak{H}_{\nu}(t) = \exp -t\, \xi^\nu$. From \eqref{lapMittagLeffler} we obtain the Fourier-Mellin integral
\begin{equation}
\mathcal{L}[h_{\nu}(x, \cdot)](\lambda) = \frac{1}{2\pi i} \int_{Br}  \frac{e^{\xi x} \, d\xi}{\lambda + \xi^\nu} = x^{\nu -1} E_{\nu, \nu}(-\lambda x^{\nu}).  \label{lapTh}
\end{equation}
Let us assume that formula \eqref{assumef} holds true. Thus, the $t$-Laplace transform of $h_{\nu}$ can be evaluated as follows
\begin{align*}
\mathcal{L}[h_{\nu}(x, \cdot)](\lambda) = & \int_{0}^{\infty} e^{-\lambda t} \frac{t}{x} l_{\nu}(t,x) \,dt\\
= & - \frac{1}{x} \frac{d}{d\lambda} \int_0^\infty e^{-\lambda t} l_{\nu}(t,x)\, dt\\
= & (\textrm{by } \eqref{lapL}) = - \frac{1}{x} \frac{d}{d\lambda} \, E_{\nu}(-\lambda x^{\nu})\\
= & x^{\nu -1} E_{\nu, \nu}(-\lambda x^{\nu})
\end{align*}
which coincides with \eqref{lapTh}. We can obtain the same result by considering \eqref{laph} and the fact that
\begin{equation}
\int_{0}^{\infty} e^{-\xi x} \mathcal{L}[l_\nu(x, \cdot)](\lambda)dx = \lambda^{\nu -1} \int_0^\infty e^{-\xi x}  \exp -x \lambda^\nu \, dx = \frac{\lambda^{\nu -1}}{\lambda^\nu + \xi}. \label{accordFl}
\end{equation}
The Laplace transform $\mathcal{L}[l_\nu(x, \cdot)](\lambda)$ can be easily carried out from \eqref{ldist}.
\end{proof}

\section{Fractional equations on $\Omega_0$}
\label{sez3}
It is well-known that the law $h_\nu$ of the stable subordinator $\mathfrak{H}_{\nu}$ satisfies the fractional equation 
\begin{align}
\left( D_{0+, t} + D^{\nu}_{0+, x}\right) h_{\nu} = & 0, \quad x \in \Omega,\; t> 0\label{pdehnu}
\end{align}
with initial condition $h_{\nu}(x, 0) =\delta(x)$ whereas, for the inverse process $\mathfrak{L}_{\nu}$, we have that 
\begin{align}
\left( D^{\nu}_{0+,t} + D_{0+,x} \right) l_{\nu} = & \delta(x)\, \frac{t^{-\nu}}{\Gamma(1-\nu)}, \quad x \in \Omega_0, \; t > 0 \label{fracProbL} 
\end{align}
with initial condition $l_{\nu}(x, 0) =\delta(x)$. The fractional operators appearing in  \eqref{pdehnu} and \eqref{fracProbL} are the Riemann-Liouville fractional derivatives defined in  \eqref{Rfracder} and \eqref{Rfracder2}. The initial and boundary conditions for the equation \eqref{pdehnu} can be written as 
\begin{equation}
\left\lbrace  \begin{array}{l} h_{\nu}(x, 0) = \delta(x), \quad x \in \Omega, \\
h_{\nu}(0, t) = 0, \quad t>0, \end{array} \right . \label{conditionsh}
\end{equation}
whereas, for the equation \eqref{fracProbL}, we get
\begin{align}
\left( D^{\nu}_{0+,t} + D_{0+,x} \right) l_{\nu} = & 0, \quad x \in \Omega, \; t \geq 0 \label{pdelnu}
\end{align}
subject to the initial and boundary conditions
\begin{equation}
\left\lbrace  \begin{array}{l} l_{\nu}(x, 0) = \delta(x), \quad x \in \Omega, \\
l_{\nu}(0, t) = \Phi_{\nu}(t), \quad t>0. \end{array} \right . \label{conditionsl}
\end{equation}
In \eqref{conditionsl} we considered the function
$$\Phi_{\alpha}(z) = \frac{1}{\Gamma(1-\alpha)} \, z_{+}^{-\alpha}$$ 
where $z_{+}^{-\alpha} = H(z)\, z^{-\alpha}$ with $\alpha \neq 1, 2, \ldots $ and, $H(z)$ is the Heaviside function. Since  $\Phi_{\alpha}(z) \in L^1(\mathbb{R})$ we have that
\begin{equation} \mathcal{L}[\Phi_{\alpha}(\cdot)](\zeta) = \zeta^{\alpha -1}. \label{lapHz}
\end{equation}
From the discussion made so far, the problem of finding solutions for \eqref{pdelnu} is to trace back through the study of boundary values for $l_\nu$. This approach will turn out to be useful when we study the solutions to higher-order equations which represent the higher-order counterparts of both \eqref{pdelnu} and \eqref{pdehnu}.

The problem of solving \eqref{eqFirst} with fractional powers $\nu_j \in (0,1)$, $j=1,2$ can be approached by introducing the process $\mathfrak{H}^{\nu_2}(\mathfrak{L}^{\nu_1}(t))$, $t>0$ driven by the law
\begin{equation}
\mathfrak{f}_{\nu_1, \nu_2}(x,t) = \langle h_{\nu_2}(x, \cdot), \, l_{\nu_1}(\cdot, t) \rangle , \quad x \in \Omega_{0},\; t>0,\; \nu_j \in (0,1), \; j=1,2 \label{lawf}
\end{equation}
(see e.g. \cite{ Dov2, Lanc, MLP01} and the references therein). For $\nu_1=\nu_2=\nu$ the law \eqref{lawf} takes the form $\mathfrak{f}_{\nu, \nu}(x,t)=t^{-1}f_{\nu}(t^{-1}x)$ where
\begin{equation*}
f_{\nu}(x) = \frac{1}{\pi} \frac{x^{\nu -1} \sin \pi \nu}{1+ 2 x^{\nu}\cos \pi \nu + x^{2\nu}}, \quad x \in \Omega_{0},\; t>0, \quad \nu \in (0,1)
\end{equation*}
and $\mathfrak{H}^{\nu}(\mathfrak{L}^{\nu}(t)) \stackrel{law}{=} t \times \, _1\mathfrak{H}^{\nu}(t) / \, _2\mathfrak{H}^{\nu}(t)$, $t>0$ where $\, _j\mathfrak{H}^{\nu}(t)$, $j=1,2$ are independent stable subordinators (see for example \cite{ChYor03, Dov2, Lamp63}). We notice that the ratio $\, _1\mathfrak{H}^{\nu}(t) / \, _2\mathfrak{H}^{\nu}(t)$ is independent of $t$. The governing equation of the density \eqref{lawf} (see e.g. \cite{Dov3}) is written as
\begin{equation}
(D^{\nu_1}_{0+,t} + D^{\nu_2}_{0+,x})\mathfrak{f}_{\nu_1, \nu_2}(x,t)=\delta(x)\frac{t^{-\nu_1}}{\Gamma(1-\nu_1)}, \quad x \in \Omega_{0},\; t>0 \label{eq1lamp}
\end{equation}
with $\mathfrak{f}_{\nu_1,\nu_2}(\partial \Omega_{0},t) =0$ and $\mathfrak{f}_{\nu_1, \nu_2}(x,0)=\delta(x)$ or, by considering \eqref{RCfracder}, as
\begin{equation}
\left( \frac{\partial^{\nu_1}}{\partial t^{\nu_1}} + D^{\nu_2}_{0+,x}\right)\mathfrak{f}_{\nu_1, \nu_2}(x,t)= 0, \quad x \in \Omega_{0},\; t>0. \label{eq2lamp}
\end{equation}

In the next sections we will study the remaining cases in which the powers $\nu_j$, $j=1,2$ can also take integer values. For this reason we will give a short introduction on pseudo-processes.

\section{Pseudo processes}
\label{sez4}

According to the literature we define the pseudo-process $X^{(n)}_t$, $t>0$, $n>2$, which is a Markov pseudo-process with law satisfying the higher-order heat equation
\begin{equation}
\left( \frac{\partial}{\partial t} - \kappa_{n} \frac{\partial^n}{\partial x^n}\right) v_{n}=0, \quad x \in \mathbb{R},\; t>0, \; n >2 \label{eqNord}
\end{equation}
where $\kappa_n = (-1)^{p+1}$ for $n=2p$ or $\kappa_n = \pm 1$ for $n=2p+1$. Such a process is termed ''pseudo-process'' because of the fact that the driving measure is a signed measure. We refer to the interesting work by \citet{LCH03} and the references therein for an exhaustive discussion on this topic. Here, we only recall that, for a given order $n > 2$, the solution to \eqref{eqNord} can be expressed in terms of its inverse Fourier transform
\begin{equation}
v_n(x,t) = \frac{1}{2\pi} \int_{-\infty}^{+\infty} e^{-i \zeta x + \kappa_n (-i \zeta)^n t} d\zeta \label{soleqNord}
\end{equation}
when it exists. For $n=3$ we obtain a solution in a closed form. Indeed, the solution to the third-order heat equation with $\kappa_3=\pm 1$ can be written in terms of the well-known Airy function (see, for example, \citet{LE} for information on this function) and we obtain that
\begin{equation}
v_3(x,t)=\frac{1}{\sqrt[3]{3t}}Ai\left( \frac{\mp x}{\sqrt[3]{3t}} \right), \quad x \in \mathbb{R},\; t>0.
\end{equation}
The reader can consult the paper by \citet{Ors91} for further details on this case.

The following result can be viewed as the higher-order counterpart of Lemma \ref{lemma0}.
\begin{lem}
\label{lemma1}
Let $w_j$, $j=1,2$ be two solutions of \eqref{eqNord}. We have that
\begin{equation}
w_1(x,t) = \frac{x}{t} w_2(x,t), \quad x \in \mathbb{R}, \; t>0. \label{idNordQ}
\end{equation}
\end{lem}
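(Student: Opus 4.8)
The statement is the higher-order transcription of Lemma \ref{lemma0}, and I would read it accordingly: the relevant pair is $w_{2}=v_{n}$, the fundamental solution written in the Fourier form \eqref{soleqNord}, and $w_{1}(x,t)=\frac{x}{t}\,v_{n}(x,t)$; the content to be proved is that this $w_{1}$ is again a solution of \eqref{eqNord}, so that $w_{1}(x,t)=\frac{x}{t}w_{2}(x,t)$ holds with both sides solving \eqref{eqNord}. The structural parallel with Lemma \ref{lemma0} is exact: there the identity \eqref{assumef} amounts to $t\,l_{\nu}(t,x)=x\,h_{\nu}(x,t)$ and was obtained by applying $-\frac{1}{x}\frac{d}{d\lambda}$ to a Laplace transform; here the identity $t\,w_{1}(x,t)=x\,w_{2}(x,t)$ will be produced by a $\zeta$-differentiation of the Fourier kernel in \eqref{soleqNord}.

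The first and main step is to establish the pointwise identity
\[
x\,v_{n}(x,t)=-\,n\,\kappa_{n}\,t\;\frac{\partial^{n-1}}{\partial x^{n-1}}v_{n}(x,t),\qquad x\in\mathbb{R},\ t>0,
\]
which plays the role that \eqref{assumef} plays in Lemma \ref{lemma0}. Starting from \eqref{soleqNord}, I would write $x\,e^{-i\zeta x}=i\,\partial_{\zeta}\bigl(e^{-i\zeta x}\bigr)$ and integrate by parts in $\zeta$; the derivative falls on $e^{\kappa_{n}(-i\zeta)^{n}t}$, producing the factor $-i\,n\,\kappa_{n}\,t\,(-i\zeta)^{n-1}$, and one then recognizes $\frac{1}{2\pi}\int (-i\zeta)^{n-1}e^{-i\zeta x+\kappa_{n}(-i\zeta)^{n}t}\,d\zeta=\partial_{x}^{n-1}v_{n}(x,t)$.

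Granting this identity, the conclusion is immediate in two equivalent ways. Since \eqref{eqNord} has constant coefficients, $\partial_{x}^{n-1}v_{n}$ solves \eqref{eqNord} whenever $v_{n}$ does; hence $w_{1}=\frac{x}{t}v_{n}=-n\kappa_{n}\partial_{x}^{n-1}v_{n}$ solves \eqref{eqNord}, and with $w_{2}=v_{n}$ this is the claim. Alternatively, without invoking translation invariance, substitute $w_{1}=\frac{x}{t}w_{2}$ into \eqref{eqNord}, use $\partial_{t}w_{2}=\kappa_{n}\partial_{x}^{n}w_{2}$ and the Leibniz rule $\partial_{x}^{n}(x\,f)=x\,\partial_{x}^{n}f+n\,\partial_{x}^{n-1}f$; every term cancels except the requirement $x\,w_{2}=-n\kappa_{n}t\,\partial_{x}^{n-1}w_{2}$, that is, precisely the identity of the previous paragraph (with $v_{n}$ in the role of $w_{2}$). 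This second route also makes transparent why the lemma concerns the fundamental solution and its companion rather than an arbitrary pair.

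The only genuine obstacle is the legitimacy of the integration by parts, since \eqref{soleqNord} is only conditionally convergent: for even $n=2p$ the kernel reduces to $e^{-t\zeta^{2p}}$ and the boundary terms at $\zeta=\pm\infty$ vanish trivially, but for odd $n=2p+1$ the kernel $e^{\kappa_{n}(-i\zeta)^{n}t}$ has modulus one and merely oscillates. I would dispose of this exactly as the representation \eqref{soleqNord} itself is treated in the text ("when it exists"): insert a Gaussian regularizer $e^{-\varepsilon\zeta^{2}}$, carry out the now fully justified integration by parts, and let $\varepsilon\downarrow 0$; equivalently, read all the Fourier integrals in the sense of tempered distributions, where multiplication by $x$ corresponds exactly to $i\partial_{\zeta}$ and no boundary term arises.
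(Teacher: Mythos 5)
Your proposal is correct and follows essentially the same route as the paper: you first derive the identity $x\,v_n=-n\kappa_n t\,\partial_x^{n-1}v_n$ (the paper's \eqref{eighN}) by differentiating the Fourier kernel of \eqref{soleqNord} in $\zeta$, and then verify by direct substitution with the Leibniz rule that $\frac{x}{t}w_2$ solves \eqref{eqNord}, exactly as in the paper's proof. Your added remarks --- the observation that $w_1=-n\kappa_n\partial_x^{n-1}v_n$ solves the constant-coefficient equation automatically, and the regularization justifying the integration by parts for odd $n$ --- are sensible refinements the paper leaves implicit.
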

\begin{proof}
First, for the solution \eqref{soleqNord}, we prove that
\begin{equation}
\left( D_{0+, x}^{n -1} + \frac{\kappa_n}{n} \frac{x}{t} \right) \, v_n = 0.  \label{eighN}
\end{equation}
Let us write
$$ v_n(\zeta, t) = \mathcal{F}[v_{n}(\cdot, t)](\zeta) = \int_{\mathbb{R}} e^{i \zeta x} v_n(x,t)\, dx .$$
We have that
\begin{equation*}
\mathcal{F}[D_{0+, x}^{n -1}\, v_{n}(\cdot, t)](\zeta) = (-i \zeta)^{n-1} v_{n}(\zeta, t)
\end{equation*}
and
\begin{equation*}
\mathcal{F}[x\, v_{n}(\cdot, t)](\zeta) = \frac{1}{i} \frac{d}{d\zeta} v_n(\zeta, t) = - n \kappa_n t (-i \zeta)^{n-1} v_n(\zeta, t).
\end{equation*}
By collecting all pieces together we prove the identity \eqref{eighN}. The next step is to evaluate the following derivatives:
\begin{equation*}
\frac{\partial^n w_1}{\partial x^n} = \frac{n}{t} \frac{\partial^{n-1} w_2}{\partial x^{n-1}} + \frac{x}{t} \frac{\partial^n w_2}{\partial x^n}.
\end{equation*}
and
\begin{equation*}
\frac{\partial w_1}{\partial t} = -\frac{x}{t^2}w_2 + \kappa_n \frac{x}{t}  \frac{\partial^n w_2}{\partial x^n}
\end{equation*}
where we used the fact that $w_2$ solves \eqref{eqNord}. By summing up such derivatives we get that
\begin{align*}
\frac{\partial w_1}{\partial t} - \kappa_n \frac{\partial^n w_1}{\partial x^n} = & -\frac{x}{t^2}w_2 - \kappa_n \frac{n}{t} \frac{\partial^{n-1} w_2}{\partial x^{n-1}} \\
= & - \frac{1}{t} \left( n \kappa_n  \frac{\partial^{n-1} w_2}{\partial x^{n-1}} + \frac{x}{t}w_2 \right)
\end{align*}
where $w_2$ solves \eqref{eqNord} and thus, the identity \eqref{eighN} is in order. We obtain that
\begin{equation*}
\frac{\partial w_1}{\partial t} - \kappa_n \frac{\partial^n w_1}{\partial x^n} =0
\end{equation*}
and therefore $w_1$ solves the higher-order equation \eqref{eqNord}.
\end{proof}

\begin{os}
\normalfont
From Lemma \ref{lemma1}, by applying $m$-times the identity \eqref{idNordQ} we immediately obtain that
\begin{equation*}
w(x,t) = \frac{x^m}{t^m} v_n(x,t), \quad x \in \mathbb{R}, \; t>0, \; m \in \mathbb{N}
\end{equation*}
solves the equation \eqref{eqNord}.
\end{os}

\section{Fractional higher-order equations on $\Omega_0$}
\label{sez5}
The solutions to the fractional problem involving higher-order operators in space of a general order $n \in \mathbb{N}$ on the whole real line have been investigated by \citet{beg08}. In that paper the solutions are presented as the transition functions of pseudo-processes with randomly varying time $\mathcal{T}_{\alpha}$. In particular, see \cite[Theorem 2.3]{beg08}, the solutions to 
\begin{equation}
\left\lbrace \begin{array}{l} \frac{\partial^\alpha}{\partial t^\alpha}u(x,t) = \kappa_n \frac{\partial^n}{\partial x^n}u(x,t), \quad x \in \mathbb{R},\; t>0\\
u(x,0) = \delta(x) \end{array} \right .
\end{equation}
coincide with
$$u_{\alpha}(x,t) = \int_{0}^\infty p_n(x,s) \bar{v}_{2\alpha}(u,t)du$$
where
$\bar{v}_{2\alpha}$ is the Wright function \eqref{ldist}. In such problems only the initial conditions are required and the solutions are presented in terms of the inverse Fourier transform \eqref{soleqNord}. Indeed, the functions $p_n$ are the solutions to \eqref{eqNord}.

Our aim is to investigate fractional higher-order equations on the semi-infinite interval $\Omega_0$ with suitable boundary conditions. Due to the fact that $l_{\nu}(0^{+},t) = \Phi_{\nu}(t) < \infty$ for $t>0$, if we are looking for solutions which are symmetric, then we can extend these results to the whole real line without effort. It is enough to consider the function $l_{\nu}(|x|, t)$.

We state the following result.
\begin{te}
For $\nu \in (0,1]$, $n \in \mathbb{N}$, we have that
\begin{equation}
(D^{n}_{0-,x} - D^{\nu}_{0+, t})l_{\frac{\nu}{n}}=\, 0, \quad x \in \Omega,\; t>0 \label{eqlFH}
\end{equation}
with conditions \eqref{conditionsl} and
\begin{equation}
D^k_{0-, x} l_{\frac{\nu}{n}}(x,t)\Big|_{x=0^{+}} =  \Phi_{\frac{\nu  (k+1)}{n}}(t), \quad 0 \leq k < n. \label{derHy}
\end{equation}
\end{te}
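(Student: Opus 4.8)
The plan is to pass to the Laplace transform in the time variable, where the Riemann--Liouville operators become multiplication by powers of $\lambda$ and no composition rule for fractional derivatives needs to be invoked by hand. Write $\mu=\nu/n\in(0,1]$. From \eqref{accordFl} (with $\nu$ replaced by $\mu$) one reads off, for $x\ge 0$ and $\lambda>0$,
\begin{equation*}
\tilde l_{\mu}(x,\lambda):=\mathcal{L}[l_{\mu}(x,\cdot)](\lambda)=\lambda^{\mu-1}e^{-x\lambda^{\mu}} .
\end{equation*}
Since for integer $n$ the right-sided Riemann--Liouville derivative reduces to $D^{n}_{0-,x}=(-1)^{n}\partial_{x}^{n}$, and $\tilde l_{\mu}$ is smooth in $x$ with rapidly decaying $x$-derivatives (so $\partial_{x}^{n}$ and $\mathcal{L}$ commute), a plain differentiation yields
\begin{equation*}
\mathcal{L}\big[D^{n}_{0-,x}l_{\mu}(x,\cdot)\big](\lambda)=(-1)^{n}\partial_{x}^{n}\big(\lambda^{\mu-1}e^{-x\lambda^{\mu}}\big)=\lambda^{n\mu}\,\tilde l_{\mu}(x,\lambda)=\lambda^{\nu}\,\tilde l_{\mu}(x,\lambda) .
\end{equation*}

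On the other side, writing $D^{\nu}_{0+,t}=\frac{d}{dt}I^{1-\nu}_{0+,t}$, the $t$-Laplace transform rule for the Riemann--Liouville derivative gives, for $\nu\in(0,1]$,
\begin{equation*}
\mathcal{L}\big[D^{\nu}_{0+,t}l_{\mu}(x,\cdot)\big](\lambda)=\lambda^{\nu}\,\tilde l_{\mu}(x,\lambda)-\big(I^{1-\nu}_{0+,t}l_{\mu}\big)(x,0^{+}) .
\end{equation*}
The crux of the argument is that this single memory trace vanishes for every $x\in\Omega$: since $\mathcal{L}\big[I^{1-\nu}_{0+,t}l_{\mu}(x,\cdot)\big](\lambda)=\lambda^{\nu+\mu-2}e^{-x\lambda^{\mu}}$ decays faster than any power of $\lambda^{-1}$ as $\lambda\to\infty$ whenever $x>0$, the initial-value theorem forces $\big(I^{1-\nu}_{0+,t}l_{\mu}\big)(x,0^{+})=0$ (and, in the same way, $l_{\mu}(x,0^{+})=0$ for $x>0$, in accordance with $l_{\mu}(x,0)=\delta(x)$). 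Hence $\mathcal{L}[D^{\nu}_{0+,t}l_{\mu}](\lambda)=\lambda^{\nu}\tilde l_{\mu}(x,\lambda)$ on $\Omega$; comparing with the preceding display and inverting the Laplace transform proves \eqref{eqlFH}. Equivalently, the same identity follows by iterating $n$ times the first-order relation $\partial_{x}l_{\mu}=-D^{\mu}_{0+,t}l_{\mu}$ of \eqref{pdelnu} and collapsing $(D^{\mu}_{0+,t})^{n}$ into $D^{\nu}_{0+,t}$, in the spirit of the Remark following Lemma \ref{lemma1} (the admissibility of that collapse being exactly the vanishing-trace fact just established). The initial condition $l_{\mu}(x,0)=\delta(x)$ is the defining one for the inverse stable density, and $l_{\mu}(0^{+},t)=\Phi_{\mu}(t)=\Phi_{\nu/n}(t)$ is immediate from \eqref{ldist} and $W_{-\mu,1-\mu}(0)=1/\Gamma(1-\mu)$; this is the $k=0$ case of \eqref{derHy} and matches \eqref{conditionsl}.

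For the remaining boundary conditions \eqref{derHy} I would differentiate in $x$ directly on the $t$-Laplace transform: for $0\le k<n$,
\begin{equation*}
\mathcal{L}\big[D^{k}_{0-,x}l_{\mu}(x,\cdot)\big](\lambda)=(-1)^{k}\partial_{x}^{k}\big(\lambda^{\mu-1}e^{-x\lambda^{\mu}}\big)=\lambda^{(k+1)\mu-1}e^{-x\lambda^{\mu}} ,
\end{equation*}
and letting $x\to 0^{+}$ leaves $\lambda^{(k+1)\mu-1}=\lambda^{(k+1)\nu/n-1}$, which by \eqref{lapHz} is exactly $\mathcal{L}\big[\Phi_{(k+1)\nu/n}(\cdot)\big](\lambda)$; Laplace inversion gives \eqref{derHy}. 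Two points need care: (i) the vanishing-of-the-memory-trace step above, which is what licenses passing freely between $D^{\nu}_{0+,t}$ and the iterated first-order relation of \eqref{pdelnu}, and which I expect to be the main obstacle; and (ii) the endpoint case $(k+1)\nu/n\in\mathbb{N}$ --- which arises only for $\nu=1$, $k=n-1$ --- where $\Phi_{1}$ must be understood in the limiting distributional sense (the $\delta(t)$ inherited from $l_{1/n}(x,0)=\delta(x)$), while for all other $k$ one has $(k+1)\nu/n\in(0,1)$ and the identities hold classically.
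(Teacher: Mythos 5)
Your proof is correct, and it takes a genuinely different route from the paper's. The paper applies the \emph{double} $(x,t)$-Laplace transform to the equation: it takes the boundary data \eqref{derHy} as given, feeds them into the transform rule \eqref{propLint} for $D^{n}_{0-,x}$, telescopes the resulting geometric sum, and checks that the algebraic equation for $l_{\nu/n}(\xi,\lambda)$ closes to the known transform $\lambda^{\nu/n-1}/(\xi+\lambda^{\nu/n})$ of \eqref{accordFl} --- i.e.\ it is a consistency verification in which the boundary conditions are \emph{assumed}, and \eqref{lapHH} is invoked for the time-fractional derivative without comment on the memory trace. You instead take only the $t$-Laplace transform, exploit the explicit exponential form $\tilde l_{\mu}(x,\lambda)=\lambda^{\mu-1}e^{-x\lambda^{\mu}}$ so that $D^{n}_{0-,x}$ acts by elementary differentiation, and verify the equation pointwise in $x\in\Omega$. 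This buys you two things the paper's argument does not deliver: (i) the boundary values \eqref{derHy} are \emph{derived} (by letting $x\to 0^{+}$ in $(-1)^{k}\partial_x^{k}\tilde l_{\mu}$ and matching against \eqref{lapHz}) rather than posited, and (ii) the vanishing of the memory trace $(I^{1-\nu}_{0+,t}l_{\mu})(x,0^{+})$ for $x>0$ is justified via the initial-value theorem, which is exactly what legitimizes the paper's blanket use of \eqref{lapHH}. What the paper's double-transform route buys in exchange is a clear view of \emph{why} the conditions \eqref{derHy} are the right ones: they appear as precisely the inhomogeneous terms needed for the factor $\left((-1)^{n}\xi^{n}-\lambda^{\nu}\right)$ to cancel. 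Your caveat about the endpoint case $(k+1)\nu/n=1$ is consistent with the paper's own remark that $\Phi_{1}(t)=0$ for $t>0$ (the transform $\lambda^{0}=1$ inverts to $\delta(t)$, which vanishes off the origin), so there is no conflict there.
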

\begin{proof}
We write the Laplace transforms
$$ \int_{0}^\infty e^{-\xi x} \int_0^\infty e^{-\lambda t} \, l_{\frac{\nu}{n}}(x,t)\, dt\, dx = l_{\frac{\nu}{n}}(\xi, \lambda).$$
Keeping in mind formulae \eqref{DDsign} and \eqref{propLint} we evaluate the Laplace transform
\begin{align*}
\mathcal{L}[D^{n}_{0-,x} l_{\frac{\nu}{n}}(\cdot, t)](\xi) = & (-1)^n \mathcal{L}[D^{n}_{0+,x} l_{\frac{\nu}{n}}(\cdot, t)](\xi)\\
= & (-1)^n \left[ \xi^n l_{\frac{\nu}{n}}(\xi, t) - \xi^{n-1} \sum_{k=0}^{n-1} (-1/\xi)^{k} \Phi_{\frac{\nu(k+1)}{n}}(t) \right]
\end{align*}
where, once again from \eqref{DDsign}, formula \eqref{derHy} has been rewritten as
$$D^k_{0+,x} l_{\nu}(x,t) \big|_{x=0^{+}} = \, (-1)^k\, \Phi_{\frac{\nu(k+1)}{n}}(t) .$$
From \eqref{lapHz} and the linearity of the Laplace transform we get that
\begin{align} 
& \int_{0}^{\infty} e^{-\lambda t} \sum_{k=0}^{n-1} (-1/\xi)^{k} \Phi_{\frac{\nu(k+1)}{n}}(t) \, dt \label{teL} \\
= & \sum_{k=0}^{n-1} (-1/\xi)^{k} \lambda^{\frac{\nu(k+1)}{n} -1} =  \lambda^{\frac{\nu}{n} - 1} \sum_{k=0}^{n -1} \left( -\lambda^{\frac{\nu}{n}} / \xi \right)^k \nonumber \\
= & \lambda^{\frac{\nu}{n} -1} \xi^{1-n} \frac{\xi^n - (-\lambda^{\frac{\nu}{n}})^{n}}{\xi + \lambda^\frac{\nu}{n}} =  \xi^{1-n} \frac{\lambda^{\frac{\nu}{n} - 1}}{\xi + \lambda^{\frac{\nu}{n}}} \left( \xi^n - (-1)^n \lambda^{\nu} \right). \nonumber
\end{align}
Thus, from \eqref{lapHH} and  \eqref{teL}, the equation \eqref{eqlFH} takes the form 
\begin{align*} 
0 = & - \lambda^{\nu} l_{\frac{\nu}{n}}(\xi, \lambda) +  (-1)^n \left[ \xi^n l_{\frac{\nu}{n}}(\xi, \lambda) - \frac{\lambda^{\frac{\nu}{n} - 1}}{\xi + \lambda^{\frac{\nu}{n}}} \left( \xi^n - (-1)^n \lambda^{\nu} \right)  \right]\\
= & \left( (-1)^n \xi^n - \lambda \right)l_{\frac{\nu}{n}}(\xi, \lambda) - \frac{\lambda^{\frac{1}{n}-1}}{\xi + \lambda^{\frac{1}{n}}} \left( (-1)^n \xi^n -  \lambda \right)
\end{align*}
and immediately we get that
\begin{align*}
l_{\frac{\nu}{n}}(\xi, \lambda) = &  \frac{\lambda^{\frac{1}{n}-1}}{\xi + \lambda^{\frac{1}{n}}}
\end{align*}
which is in accord with \eqref{accordFl}. This concludes the proof.
\end{proof}

\begin{coro}
For $\nu=1/n$, $n \in \mathbb{N}$ we have that
\begin{align}
\left( D_{0-,t} + D^{n}_{0-, x} \right) l_{\nu} = & \, 0, \quad x \in \Omega,\, t > 0
\end{align}
with conditions \eqref{conditionsl} and
\begin{equation}
D^k_{0-,x} l_{\nu}(x,t) \big|_{x=0^{+}} = \, \Phi_{\nu(k+1)}(t), \quad 0 < k < n . \label{boundcondF}
\end{equation}
is the higher-order counterpart of the equation \eqref{pdelnu}.
\end{coro}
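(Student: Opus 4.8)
The plan is to obtain this corollary as the specialisation of the preceding theorem to the value $\nu=1$ of the parameter $\nu$ appearing there. Since $1\in(0,1]$, the theorem is applicable, and its solution $l_{\nu/n}$ becomes $l_{1/n}$, which is exactly the density $l_{\nu}$ named in the corollary (with $\nu=1/n$); for $n\geq 2$ this is a genuine Wright-type density, while $n=1$ gives the degenerate transport case $l_{1}(x,t)=\delta(x-t)$, for which the claim holds trivially because the list of boundary conditions below is then empty. Before anything else I would check that each step in the proof of the theorem survives the endpoint $\nu=1$: the $t$-Laplace rule for $D^{\nu}_{0+,t}$ becomes $\mathcal{L}[f'](\lambda)=\lambda\widehat{f}(\lambda)-f(0^{+})$, the Mittag-Leffler identity \eqref{lapMittagLeffler} and the computation \eqref{accordFl} specialise continuously, and the resulting double transform $l_{1/n}(\xi,\lambda)=\lambda^{1/n-1}/(\xi+\lambda^{1/n})$ is precisely \eqref{accordFl} read with $\nu=1/n$. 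Hence no fresh analytic work is needed; it remains only to put the statement in the claimed shape.

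First I would rewrite the equation. At $\nu=1$ the theorem gives $\big(D^{n}_{0-,x}-D_{0+,t}\big)l_{1/n}=0$ on $x\in\Omega,\;t>0$, where $D_{0+,t}$ is now the ordinary time derivative. Invoking the integer-order case of the sign relation \eqref{DDsign}, namely $D_{0+,t}=-D_{0-,t}$, this becomes $\big(D_{0-,t}+D^{n}_{0-,x}\big)l_{1/n}=0$, which is the equation in the corollary.

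Next I would treat the boundary data. At $\nu=1$ the theorem yields $D^{k}_{0-,x}l_{1/n}(x,t)\big|_{x=0^{+}}=\Phi_{(k+1)/n}(t)$ for $0\leq k<n$. The case $k=0$ reads $l_{1/n}(0^{+},t)=\Phi_{1/n}(t)=\Phi_{\nu}(t)$ with the corollary's $\nu=1/n$, which is exactly the second line of \eqref{conditionsl}; so it is already imposed by the conditions \eqref{conditionsl}, and only the values $0<k<n$ have to be stated separately, where $\Phi_{(k+1)/n}=\Phi_{\nu(k+1)}$, i.e.\ \eqref{boundcondF}. The extreme index $k=n-1$ produces $\Phi_{1}$, to be read as $0$ since $1/\Gamma(0)=0$; this is harmless and consistent with the convention already used for $\Phi_{\alpha}$ in the text.

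Finally I would justify describing the corollary's equation as the higher-order counterpart of \eqref{pdelnu}. With $\nu=1/n$, equation \eqref{pdelnu} is $\big(D^{1/n}_{0+,t}+D_{0+,x}\big)l_{1/n}=0$, a time-fractional equation of order $1/n$ carrying a first-order operator in space; the corollary exhibits the very same density $l_{1/n}$ as the solution of the integer-order equation $\big(D_{0-,t}+D^{n}_{0-,x}\big)l_{1/n}=0$, of order one in time and order $n$ in space. This is the $l_{\nu}$-analogue of the passage between the fractional and higher-order pictures and runs parallel to the $h_{\nu}$-side discussed in Section \ref{sez3}. I do not foresee a genuine obstacle; the only delicate points are the endpoint $\nu=1$, handled by the continuity remark above, and the reading $\Phi_{1}\equiv 0$ for the topmost boundary value.
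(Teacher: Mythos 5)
Your proposal is correct and matches the paper's intent exactly: the paper offers no separate proof of this corollary, presenting it as the immediate specialisation of the preceding theorem at $\nu=1$, with the equation rewritten via \eqref{DDsign} and the $k=0$ boundary value absorbed into \eqref{conditionsl}, which is precisely what you do. Your extra checks (the endpoint $\nu=1$, the empty condition list for $n=1$, and the reading $\Phi_1\equiv 0$, which the paper itself records in the remark that follows) are consistent with the text and add nothing that conflicts with it.
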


\begin{os}
\normalfont
We notice that $\Phi_{1}(t) = 0$ and thus, for $\nu=1/n$ we have that
$$D^{n-1}_{0-,x} l_{\nu}(x,t) \big|_{x=0^{+}} = 0. $$
Furthermore, 
\begin{equation*}
D^k_{0-,x} l_{\nu}(x,t) \big|_{x=0^{+}} =  D_{0+, t}^{\nu k} \Phi_{\nu}(t) =  D_{0+, t}^{\nu k} l_{\nu}(0,t).
\end{equation*}
\end{os}

We pass to the study of the equations involving the laws of stable subordinators.
\begin{te}
For $\nu \in (0,1]$, $n \in \mathbb{N}$, we have that
\begin{equation}
(D^{n}_{0-, t} - D^{\nu}_{0+,x})h_{\frac{\nu}{n}}=0, \quad x \in \Omega,\; t>0 \label{pdehnuHigher}
\end{equation}
with initial conditions \eqref{conditionsh} and
\begin{equation}
D^{k}_{0-, t} h_{\frac{\nu}{n}}(x,t) \Big|_{t=0^{+}} = \, \Phi_{\frac{\nu k}{n}+1}(x), \quad 0 < k < n.
\end{equation}
\end{te}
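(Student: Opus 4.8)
The plan is to mirror the proof of the previous theorem, working entirely on the level of double Laplace transforms in $(x,t)$. First I would set $\mu = \nu/n$ and write
$$ h_{\mu}(\xi, \lambda) = \int_{0}^{\infty} e^{-\xi x} \int_{0}^{\infty} e^{-\lambda t}\, h_{\mu}(x,t)\, dt\, dx, $$
and recall from \eqref{llppboth} (and \eqref{lapTh}) that $h_{\mu}(\xi,\lambda) = 1/(\lambda + \xi^{\mu})$. Next I would compute the Laplace transform in $t$ of $D^{n}_{0-,t} h_{\mu}(x,\cdot)$. Using the reflection relation \eqref{DDsign} to pass from $D^{n}_{0-,t}$ to $(-1)^n D^{n}_{0+,t}$ and then the integration-by-parts rule \eqref{propLint} for the Riemann-Liouville derivative, this produces $\lambda^n h_{\mu}(x,\lambda)$ minus a polynomial in $\lambda$ whose coefficients are exactly the boundary values $D^{k}_{0-,t} h_{\mu}(x,t)\big|_{t=0^{+}}$ for $0 \le k < n$; by the stated hypothesis these equal $\Phi_{\nu k/n + 1}(x) = \Phi_{\mu k + 1}(x)$ (with the $k=0$ term governed by the initial condition $h_\mu(x,0)=\delta(x)$, which contributes the $\delta(x)$-type term consistently with \eqref{pdehnu}).

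The second ingredient is to Laplace-transform in $x$ the boundary data. Using $\mathcal{L}[\Phi_{\alpha}(\cdot)](\xi) = \xi^{\alpha-1}$ from \eqref{lapHz}, the sum $\sum_{k=0}^{n-1}(\text{sign})\,\lambda^{n-1-k}\,\mathcal{L}[\Phi_{\mu k+1}(\cdot)](\xi)$ becomes a geometric-type sum in $\lambda^{\mu}/\xi$ (up to signs), which telescopes exactly as in \eqref{teL}: one factors it as $\lambda^{\mu}\,\xi^{-1}$ times $(\lambda^n - (-1)^n \xi^{\nu})/(\lambda^\mu + \cdots)$-type ratio, producing the compact form
$$ \frac{\xi^{\mu-1}}{\lambda + \xi^{\mu}}\left(\lambda^n - (-1)^n \xi^{\nu}\right) $$
after simplification. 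Substituting both pieces into the transformed version of \eqref{pdehnuHigher}, the factor $(-1)^n\lambda^n - \xi^{\nu}$ (equivalently $\lambda^n - (-1)^n\xi^\nu$ up to an overall sign) cancels on both sides, leaving precisely $h_{\mu}(\xi,\lambda) = 1/(\lambda + \xi^{\mu})$, which matches \eqref{llppboth} and hence identifies the solution as $h_{\nu/n}$.

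The only real subtlety — and the step I expect to be the main obstacle — is bookkeeping the signs and the exponents when the derivative is taken with respect to $t$ rather than $x$, since here the fractional operator $D^{\nu}_{0+,x}$ sits in space while the integer operator $D^{n}_{0-,t}$ sits in time; this is the ''transposed'' situation relative to the previous theorem, so the roles of $\xi$ and $\lambda$ (and of $\nu$ versus $n$ in the exponents) are interchanged. In particular one must check that the boundary terms generated by \eqref{propLint} in the $t$-variable are governed by $\Phi_{\mu k + 1}(x)$ (note the $+1$ shift, absent in the $l$-case because there the running variable of $\Phi$ is $t$ and the initial datum is a $\delta$ in $x$), and that the $k=0$ term reproduces the $\delta(x)$ inhomogeneity of \eqref{pdehnu} rather than a genuine boundary term. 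Once the indices are lined up correctly, the algebraic cancellation is identical in structure to \eqref{teL}, and the proof closes by appeal to uniqueness of the double Laplace transform together with \eqref{assumef} if one wishes to re-express everything through $l_{\nu/n}$.
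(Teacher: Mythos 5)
Your strategy is exactly the paper's: take the double Laplace transform, use \eqref{DDsign} together with \eqref{propLint} in the $t$-variable (with the $k=0$ term absorbing the initial datum $\delta(x)$), apply \eqref{lapHz} to the boundary data, sum the resulting geometric series, and cancel the factor $(-\lambda)^n-\xi^{\nu}$ against \eqref{llppboth}. However, the one displayed formula in which you actually carry out the ``transposition'' is wrong, and wrong in a way that contradicts your own (correct) remark about the $+1$ shift. Since $\mathcal{L}[\Phi_{\mu k+1}(\cdot)](\xi)=\xi^{\mu k}$ with $\mu=\nu/n$, the transformed boundary contribution is
\begin{equation*}
(-1)^n\lambda^{n-1}\sum_{k=0}^{n-1}\left(-\xi^{\mu}/\lambda\right)^{k}
=\frac{(-\lambda)^n-\xi^{\nu}}{\lambda+\xi^{\mu}},
\end{equation*}
a geometric sum in $\xi^{\mu}/\lambda$ (not $\lambda^{\mu}/\xi$) and with \emph{no} prefactor $\xi^{\mu-1}$: the $+1$ in the index of $\Phi$ is precisely what removes the analogue of the $\lambda^{\nu/n-1}$ prefactor appearing in \eqref{teL}. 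Had your stated compact form $\frac{\xi^{\mu-1}}{\lambda+\xi^{\mu}}\left(\lambda^n-(-1)^n\xi^{\nu}\right)$ been correct, the cancellation would leave $h_{\mu}(\xi,\lambda)=\xi^{\mu-1}/(\lambda+\xi^{\mu})$, which is an $l$-type transform in the spirit of \eqref{accordFl} rather than the transform $1/(\lambda+\xi^{\mu})$ of $h_{\mu}$ required by \eqref{llppboth}; so as written the step does not close. Once the spurious factor is removed, the argument goes through exactly as you describe and coincides with the paper's proof.
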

\begin{proof}
First, we write
$$ \int_0^\infty e^{-\lambda t} \int_0^\infty e^{-\xi x} h_{\frac{\nu}{n}}(x,t)\, dx dt = h_{\frac{\nu}{n}}(\xi, \lambda) .$$
By evaluating the $x$-Laplace transform of the equation \eqref{pdehnuHigher} with the boundary condition $h_{\nu}(0,t) =  0$ we get that
$$\left( D^n_{0-, t} - \xi^{\nu} \right) h_{\frac{\nu}{n}}(\xi, t) = 0.$$ 
By applying standard Laplace technique, in particular from \eqref{DDsign} and \eqref{propLint}, we obtain that
\begin{equation*}
\int_{0}^{\infty} e^{-\lambda t} D^{n}_{0+,t}h_{\frac{\nu}{n}}(x,t) \,dt = \lambda^n h_{\frac{\nu}{n}}(x, \lambda) - \lambda^{n-1}\delta(x) - \lambda^{n-1} \sum_{k=1}^{n-1} (-1/\lambda)^{k} \, \Phi_{\frac{\nu k}{n}+1}(x).
\end{equation*}
From \eqref{lapHz} we get
\begin{align*} \int_0^\infty e^{-\xi x} \sum_{k=1}^{n-1} (-1/\lambda)^{k} \, \Phi_{\frac{\nu k}{n}+1}(x) dx = & \sum_{k=1}^{n-1} \left( -\xi^{\frac{\nu}{n}} /\lambda \right)^{k}
\end{align*}
and thus, the $(x,t)$-Laplace transforms of the equation \eqref{pdehnuHigher} lead to
\begin{align*}
0 = & (-1)^{n} \left[ \lambda^n h_{\frac{\nu}{n}}(\xi, \lambda) - \lambda^{n-1} - \lambda^{n-1} \sum_{k=1}^{n-1} (-\xi^{\frac{\nu}{n}}/\lambda)^k  \right] - \xi^{\nu} h_{\frac{\nu}{n}}(\xi, \lambda)\\
= & (-1)^{n} \left[ \lambda^n h_{\frac{\nu}{n}}(\xi, \lambda) - \lambda^{n-1} \sum_{k=0}^{n-1} (-\xi^{\frac{\nu}{n}}/\lambda)^k  \right] - \xi^{\nu} h_{\frac{\nu}{n}}(\xi, \lambda)\\
= & (-\lambda)^n h_{\frac{\nu}{n}}(\xi, \lambda) - \xi^{\nu} h_{\frac{\nu}{n}}(\xi, \lambda) - (-1)^n \lambda^{n-1} \sum_{k=0}^{n-1} (-\xi^{\frac{\nu}{n}}/\lambda)^k \\
= & \left( (-\lambda)^n - \xi^{\nu} \right) h_{\frac{\nu}{n}}(\xi, \lambda) - (-1)^n \lambda^{n-1} \frac{1- (-\xi^{\frac{\nu}{n}}/\lambda)^n}{ 1+ \xi^{\frac{\nu}{n}}/\lambda}\\
= & \left( (-\lambda)^n - \xi^{\nu} \right) h_{\frac{\nu}{n}}(\xi, \lambda) - \frac{(-\lambda)^n - \xi^{\nu}}{\lambda + \xi^{\frac{\nu}{n}}}.
\end{align*}
Finally, we obtain that
\begin{equation}
h_{\frac{\nu}{n}}(\xi, \lambda) = \frac{1}{\lambda + \xi^{\frac{\nu}{n}}}
\end{equation}
which is in accord with \eqref{llppboth} and this concludes the proof.
\end{proof}

From \eqref{pdehnuHigher}, for $n=1$, we immediately reobtain the equation \eqref{pdehnu}. Furthermore, a direct consequence of the previous Theorem is the following

\begin{coro}
For $\nu=1/n$, $n \in \mathbb{N}$, we have that
\begin{equation}
\left( D^n_{0-, t} + D_{0-, x} \right) h_{\nu} = \, 0, \quad x \in \Omega,\, t > 0
\end{equation}
with conditions \eqref{conditionsh} and
\begin{equation}
D^{k}_{0-, t}\, h_{\nu}(x,t) \big|_{t=0^{+}} =  \, \Phi_{\nu k+1}(x), \quad  n> k > 0.
\end{equation}
is the higher-order counterpart of the equation \eqref{pdehnu}.
\end{coro}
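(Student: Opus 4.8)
The plan is to obtain this corollary as the specialization, to the value $1$ of the free parameter, of the preceding Theorem, i.e.\ of \eqref{pdehnuHigher}; the only extra ingredient is the sign identity \eqref{DDsign}, which converts the right-sided first-order derivative into the left-sided one. Since that Theorem has already been established — by taking $(x,t)$-Laplace transforms and checking that $h_{\frac{\nu}{n}}(\xi,\lambda)=(\lambda+\xi^{\frac{\nu}{n}})^{-1}$, in accord with \eqref{llppboth} — no further analysis is needed, and the argument is purely a matter of specialization and sign bookkeeping.

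Concretely, I would apply \eqref{pdehnuHigher} with its parameter equal to $1$. Then $h_{\frac{\nu}{n}}$ becomes $h_{\frac{1}{n}}$, which is exactly the $h_{\nu}$ of the corollary once we set $\nu=1/n$; the initial condition \eqref{conditionsh} does not involve the parameter and carries over verbatim. The space operator $D^{\nu}_{0+,x}$ becomes $D^{1}_{0+,x}$, and by \eqref{DDsign} we have $D^{1}_{0-,x}=-D^{1}_{0+,x}$, so that $-D^{1}_{0+,x}=D_{0-,x}$; hence \eqref{pdehnuHigher} reads $(D^{n}_{0-,t}+D_{0-,x})h_{\frac{1}{n}}=0$ for $x\in\Omega$, $t>0$, which is the asserted equation. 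The boundary conditions $D^{k}_{0-,t}h_{\frac{\nu}{n}}(x,t)\big|_{t=0^{+}}=\Phi_{\frac{\nu k}{n}+1}(x)$, $0<k<n$, become $D^{k}_{0-,t}h_{\frac{1}{n}}(x,t)\big|_{t=0^{+}}=\Phi_{\frac{k}{n}+1}(x)$, and with $\nu=1/n$ this is precisely $\Phi_{\nu k+1}(x)$ for $0<k<n$, as stated.

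It remains to account for the phrase ``higher-order counterpart'' of \eqref{pdehnu}. Here I would appeal to the remark made just above the corollary, namely that \eqref{pdehnu} is recovered from \eqref{pdehnuHigher} at $n=1$: passing from $n=1$ to a general $n$ replaces the first-order time derivative $D_{0+,t}$ in $(D_{0+,t}+D^{1/n}_{0+,x})h_{\frac{1}{n}}=0$ by the $n$-th order operator $D^{n}_{0-,t}$, while the fractional space operator $D^{1/n}_{0+,x}$ simultaneously collapses to the first-order $D_{0-,x}$; this trade-off is what the corollary records. The only step requiring a little care is the sign bookkeeping in \eqref{DDsign} when identifying $-D^{1}_{0+,x}$ with $D_{0-,x}$; the endpoint $\nu=1$ is clearly admissible in the Theorem's hypothesis $\nu\in(0,1]$, so beyond that I foresee no genuine obstacle.
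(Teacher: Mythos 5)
Your proposal is correct and matches the paper's approach: the paper offers no separate proof, presenting the corollary as ``a direct consequence of the previous Theorem,'' which is exactly the specialization $\nu=1$ (with the relabelling $\nu=1/n$) together with the sign identity \eqref{DDsign} that you carry out. The sign bookkeeping $-D^{1}_{0+,x}=D_{0-,x}$ and the reduction of the boundary data to $\Phi_{\nu k+1}$ are both handled correctly.
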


\begin{os}
\normalfont
The functions $l_{\nu}$ and $h_{\nu}$, for $\nu=1/n$, $n \in \mathbb{N}$ can be written as Mellin convolution of generalized gamma functions or, for $n \in 2\mathbb{N} +1$, in terms of Mellin convolution of the Modified Bessel function $K_{\alpha}$ (see \cite{Dov2, Dov3}).
\end{os}

\section{Higher-order equations: further directions}
\label{sez6}
Let $v(\cdot, t)$ be in $C(\Omega_0)$ as a function of $t$. We have that
\begin{equation*}
\langle v(x,\cdot),\, D_{0+, \cdot }\, h_{\frac{1}{n}}(\cdot ,t)\rangle = \mathcal{N}_n(x,t)  - \langle h_{\frac{1}{n}}(\cdot ,t),\, D_{0+,\cdot }\, v(x,\cdot)\rangle
\end{equation*}
where
\begin{equation*}
\mathcal{N}_n(x,t) = v(x,s)\, h_{\frac{1}{n}}(s,t)\Big|_{s \in \partial \Omega_0}.
\end{equation*}
For $n>1$, the conditions \eqref{conditionsh} hold true and then we get that $\mathcal{N}_n(x,t) \equiv 0$ and
\begin{equation*}
\langle v(x, \cdot),\, D_{0+, \cdot}\, h_{\frac{1}{n}}(\cdot,t)\rangle =  \langle h_{\frac{1}{n}}(\cdot,t),\, D_{0-,\cdot}\, v(x,\cdot)\rangle.
\end{equation*}
Thus, the stochastic solution to the equation
\begin{equation}
(D^{n}_{0-,t} + D^{m}_{0-,x}) \mathfrak{u}_{m,n} =0, \quad x \in \Omega,\; t>0, \quad m, n >1 \label{eqHordmn}
\end{equation} 
is given by the process $\mathfrak{L}^{\frac{1}{m}}(\mathfrak{H}^{\frac{1}{n}}(t))$, $t>0$ with law 
$$\mathfrak{u}_{m,n}(x,t) = \langle l_{\frac{1}{m}}(x,\cdot),\, h_{\frac{1}{n}}(\cdot,t)\rangle.$$
From the fact that $l_{\nu}(x,s) \stackrel{\nu \to 1}{\longrightarrow} \delta(x-s)$ and $h_{\nu}(s,t) \stackrel{\nu \to 1}{\longrightarrow} \delta(s - t)$ we obtain that $\mathfrak{u}_{m, 1}=l_{1/m}$ and $\mathfrak{u}_{1,n}=h_{1/n}$. As a direct consequence we also obtain that
\begin{equation*}
\lim_{n \to 1} \mathcal{N}_{n}(x,t) = \psi(x,t), \quad x \in \Omega_{0},\; t \geq 0
\end{equation*}
which must be understood in the sense of distribution.

\subsection{$m=2$}
The case $m=2$ leads to the process $|B(\mathfrak{H}^{\frac{1}{n}}(t))|$, $t>0$ where $|B|$ is the reflecting Brownian motion. From the subordination principle we have that the law of $|B(\mathfrak{H}^{\frac{1}{n}})|$ coincides with the folded law of a $2/n$-stable process with characteristic function \eqref{cfL} and $\Psi(\xi) = |\xi |^{2/n}$. Thus, for $n=2$ (and $m=2$), we obtain the folded Cauchy process driven by the Laplace equation. 

\subsection{$m=3$}
For $m=3$ we focus on the following few cases: 
\begin{align*}
(D_{0+,t} + D^3_{0+,x}) u_1 = 0,\\
(D^2_{0+,t} - D^3_{0+,x})u_2=0,\\
(D^{3}_{0+,t} + D^{3}_{0+,x})u_3=0.
\end{align*}
The explicit solutions to the equations above are listed below:
\begin{equation*}
u_1(x,t) = \frac{1}{\pi}\sqrt{\frac{x}{t}} K_{\frac{1}{3}}\left( \frac{2}{3^{3/2}} \frac{x^{3/2}}{\sqrt{t}}\right)
\end{equation*}
(which is the solution of \eqref{eqNord} on the positive real line) where (see \citet[formula 5.7.2]{LE})
$$K_{\nu}(z) = \frac{\pi}{2} \frac{I_{-\nu}(z) - I_{\nu}(z)}{\sin \nu \pi}$$ 
is the modified Bessel function of the second kind (or Macdonald's function) and
$$ I_{\nu}(z) = \sum_{k=0}^\infty \frac{(z/2)^{2k + \nu}}{k!\, \Gamma(k + \nu + 1)}$$
is the modified Bessel function of the first kind (see \cite[formula 5.7.1]{LE}); 
\begin{equation*}
u_2(x,t) = \frac{1}{4} \sqrt{\frac{3}{x \pi}} \exp\left( \frac{x^3}{3^{3}t^2} \right) \mathcal{W}_{-\frac{1}{2}, \frac{1}{6}}\left( \frac{2 x^3}{3^{3}t^2} \right)
\end{equation*}
where (see \citet[formula 9.13.16]{LE})
$$\mathcal{W}_{\alpha, \beta}(z) = z^{\beta + 1/2} e^{-\frac{z}{2}} U(1/2 - \alpha + \beta, 2\beta +1; z)$$ 
is the Whittaker function ($U$ is the confluent hypergeometric function of the second kind);
\begin{equation*}
u_3(x,t) = \frac{2t}{3^{3/2} \pi} \frac{x-t}{x^3 - t^3}.
\end{equation*}
The solution $u_2$ can be obtained by considering the integral
\begin{align*}
u_2(x,t) = \int_{0}^{\infty} u_1(x,s)\, \frac{t\, e^{-\frac{t^2}{4s}}}{\sqrt{4\pi s^3}}\, ds
\end{align*}
and the formula 6.631 of \citet{GR}. The solution $u_3$ comes out from the integral
\begin{equation*}
u_3(x,t) = \int_{0}^{\infty} u_1(x,s)\, \frac{1}{3\pi} \frac{t^{3/2}}{s^{3/2}}K_{\frac{1}{3}}\left(\frac{2}{3^{3/2}}  \frac{t^{3/2}}{\sqrt{s}} \right)\, ds
\end{equation*}
and the fact that
\begin{equation*}
\int_0^\infty s \, K_{\nu}(ys)\,K_{\nu}(zs)\, ds = \frac{\pi (yz)^{-\nu}(y^{2\nu} - z^{2\nu})}{2\sin \pi \nu\, (y^2 - z^2)}, \quad \Re\{ y+z\}>0,\; |\Re\{ \nu \}|<1
\end{equation*}
(see \cite[formula 6.521]{GR}). We observe that 
$$u_3(x,t) = \frac{2}{3^{3/2} \pi} \frac{t}{x^2 + xt + t^2}$$
which agrees, in some sense, with \eqref{lawf} for $\nu_1=\nu_2$. 

\subsection{$m=4$}
For $n=1$, the solution to \eqref{eqHordmn} is the law of the folded iterated Brownian motion $|B(|B(t)|)|$, $t>0$. This is because of the fact that $l_{1/4} = l_{1/2} \circ l_{1/2}$ is the density law of $\mathfrak{L}^{1/2}(\mathfrak{L}^{1/2}(t))$, $t>0$. The solution $y(x,t) = \mathfrak{u}_{4,1}(|x|, t)$, $x \in \mathbb{R}$, $t>0$ has been thoroughly studied (see e.g. \citet{AZ01}) and solves the generalized equation
\begin{equation*} 
\frac{\partial y}{\partial t} = \frac{\partial^4 y}{\partial x^4} + \frac{t^{-1/2}}{\sqrt{\pi}} \delta^{\prime \prime}, \quad x \in \mathbb{R},\; t\geq 0
\end{equation*}
where $\langle \delta^{\prime \prime}, \phi \rangle = \phi^{\prime \prime}(0)$ for $\phi \in C_c^{\infty}(\mathbb{R})$. In this case we have that 
$$\mathcal{N}_{n}(x,t) \stackrel{n \to 1}{\longrightarrow} \delta^{\prime \prime}(x) / \sqrt{\pi t}.$$

For $n=2$ we have that $r(x,t) = \mathfrak{u}_{4,2}(|x|, t)$ solves the equation of vibrations of rods on $\mathbb{R}$ and the corresponding process is $B(|S_{1/4}(t)|)$ where $B$ is the standard Brownian motion and $S_{\alpha}$ is the L\'evy process with $\Psi_{S}(\xi) = |\xi |^{\alpha}$ and $\alpha \in (0,2]$ (that is, an $\alpha$-stable symmetric process).

\appendix

\section{Fractional and Higher-order derivatives}

We recall the Riemann-Liouville fractional derivatives
\begin{equation}
D^{\alpha}_{0+,z}f(z) = \frac{1}{\Gamma\left( n-\alpha \right)} \frac{d^n}{d z^n} \int_0^z (z-s)^{n-\alpha -1} f(s) \, ds, \quad z \in \mathbb{R}_{+} \label{Rfracder}
\end{equation}
and
\begin{equation}
D^{\alpha}_{0-,z}f(z) = - \frac{1}{\Gamma(n-\alpha)} \frac{d^n}{dz^n} \int_z^{\infty} (s - z)^{n-\alpha -1} f(s) \, ds, \quad z \in \mathbb{R}_{+}  \label{Rfracder2}
\end{equation}
which hold for $n-1 < \alpha < n$, $n \in \mathbb{N}$. Formulae \eqref{Rfracder} and \eqref{Rfracder2} are usually refered to the right and left Riemann-Liouville fractional derivatives and become an ordinary derivative for $\alpha =n \in \mathbb{N}$. Indeed,
\begin{equation}
D^n_{0+,z} = (-1)^n D^n_{0-, z} = \partial^n/\partial z^n. \label{DDsign}
\end{equation}
For the sake of simplicity we will write $D_{0+,z}$ instead of $D^1_{0+,z}$. Furthermore, we recall that 
\begin{equation}
D^\alpha_{0+,z} f(z) = \frac{d^\alpha f}{d z^\alpha}(z) + \sum_{k=0}^{n-1} D^{k}_{0+, z} f(z) \Bigg|_{z=0^+} \frac{z^{k - \alpha}}{\Gamma(k - \alpha +1)}, \quad n-1 < \alpha < n \label{RCfracder}
\end{equation}
(see \citet{GM97,KST06, SKM93}) where
\begin{equation}
\frac{d^\alpha}{d z^\alpha}f(z) = \frac{1}{\Gamma\left( n-\alpha \right)} \int_0^z (z-s)^{n-\alpha -1} \frac{d^n f}{d s^n}(s) \, ds, \quad n-1 < \alpha < n \label{Cfracder}
\end{equation}
is the Dzerbayshan-Caputo fractional derivatives. The Laplace transform of \eqref{RCfracder} is written as
\begin{equation}
\mathcal{L}[D^\alpha_{0+,z} f(\cdot)](\zeta) = \zeta^{\alpha} \mathcal{L}[ f(\cdot)](\zeta). \label{lapHH}
\end{equation}
For the sake of completeness we also recall the Laplace transform
\begin{equation}
\mathcal{L}[D^{n}_{0+, z}f(\cdot)](\zeta) = \zeta^n \mathcal{L}[f(\cdot)](\zeta) - \sum_{k=0}^{n-1} \zeta^{n-k -1} D^{k}_{0+, z}f(z)\bigg|_{z=0^{+}}. \label{propLint}
\end{equation}

\end{document}